\newtheorem{thm}{Theorem}[section]
\newtheorem{prop}[thm]{Proposition}
\newtheorem{cor}[thm]{Corollary}
\theoremstyle{definition}
\newtheorem{definition}[thm]{Definition}
\newtheorem{example}[thm]{Example}
\theoremstyle{remark}
\newtheorem{remark}[thm]{Remark}
\numberwithin{equation}{section}
\newcommand{\F}{{\mathbb F}}
\newcommand{\N}{{\mathbb N}}
\newcommand{\p}{{\mathfrak p}}
\newcommand{\m}{{\mathfrak m}}
\newcommand{\V}{{\mathcal V}}
\newcommand{\gammasep}{{\gamma}_{\operatorname{sep}}}
\newcommand{\Vsep}{{\mathcal V}_{\operatorname{sep}}}
\DeclareMathOperator{\codim}{codim}
\DeclareMathOperator{\GL}{GL}
\DeclareMathOperator{\cmdef}{cmdef}
\DeclareMathOperator{\Spec}{Spec}
\DeclareMathOperator{\depth}{depth}
\DeclareMathOperator{\height}{ht}
\begin{document}

\title{Polynomial Separating Algebras and Reflection Groups}

\author{Fabian Reimers}
\address{Technische Universi\"at M\"unchen, Zentrum Mathematik - M11, 
Boltzmannstr.~3, 85748 Garching, Germany}
\email{reimers@ma.tum.de}

\begin{abstract}
This note considers a finite algebraic group $G$ acting on an affine variety $X$ by automorphisms. Results of Dufresne on polynomial separating algebras for linear representations of $G$ 
are extended to this situation.
For that purpose, we show that the 
Cohen-Macaulay defect of 
a certain ring
is greater than or equal to the minimal number $k$ such that the group is generated by $(k+1)$-reflections.
Under certain rather mild assumptions on $X$ and $G$ we deduce that a separating set of invariants of the smallest possible size $n = \dim(X)$ can exist only for reflection groups. 
\end{abstract}

\maketitle

%\tableofcontents

%%%%%%%%%%%%%%%%%%%%%%%%%%%%%%%%%%%%%%%%%%%%%%%%%%%%%%%%%%%%%%%%%%%%%%
\section*{Introduction}
%%%%%%%%%%%%%%%%%%%%%%%%%%%%%%%%%%%%%%%%%%%%%%%%%%%%%%%%%%%%%%%%%%%%%%

In the invariant theory of finite groups it has long been known that invariant rings with the best structural properties, i.e.~isomorphic to polynomial rings, can exist only for reflection groups (Shephard and Todd \cite{shephard1954finite}, Chevalley \cite{chevalley1955invariants} and Serre \cite{serre1968groupes}). In the non-modular case the converse also holds.

Subsets of the invariant ring that have the same capability of separating orbits
% as the whole invariant ring 
have turned out to be in many cases better behaved than generating sets. For example, there always exists a finite separating subset (Derksen and Kemper \cite{derksen2002computational}). We refer to \cite{kemper2009separating} for a nice and detailed survey of separating invariants.

Recently, Dufresne \cite{dufresne2009separating} proved the following generalization of the Theorem of Shephard, Todd, Chevalley and Serre: If $V$ is an $n$-dimensional linear representation of $G$ and there exists a separating set of invariants of size $n$, then the group is generated by reflections. In addition, an example was given where the invariant ring is not polynomial, but a smaller separating subalgebra is.

The separating variety is an object that naturally appears when studying separating invariants. For finite groups it is just the graph of the action (cf. Prop. \ref{PropositionIrreducibleComponentsOfVsep}).
An important step in Dufresne's proof is her discovery how the connectedness in codimension 1 of the separating variety
implies that the group is a reflection group. The proof relies on Hartshorne's Connectedness Theorem. In this short note our aim is to extend these methods from linear actions to actions on affine varieties and simultaneously from reflections to $k$-reflections where $k$ not necessarily equals $1$. We want to assume as little as necessary for the variety $X$ and the group action. For that purpose, we achieve an if-and-only-statement about the connectedness in a certain codimension of the separating variety, which is given as Theorem \ref{TheoremVsepConnectedInCodimIfAndOnlyIfGgenerated} of this note.

Then we assume that $X$ is connected and combine Theorem \ref{TheoremVsepConnectedInCodimIfAndOnlyIfGgenerated} with a version of Hartshorne's Connectedness Theorem that uses the Cohen-Macaulay defect. From that we conclude that the Cohen-Macaulay defect (or, more precisely, the set-theoretical Cohen-Macaulay defect) of the separating variety is greater than or equal to the minimal number $k$ such that $G$ is generated by $(k+1)$-reflections (see Theorem \ref{MainTheorem}).
In Corollary \ref{MainCorollary} we extend Dufresne's result to Cohen-Macaulay varieties and groups that are generated by elements having a fixed point. In the subsequent examples we show that these hypotheses on $X$ and $G$ cannot be dropped.

This paper is part of my ongoing PhD project. I would like to thank my thesis supervisor, Gregor Kemper, for his encouragement and permanent support.

%%%%%%%%%%%%%%%%%%%%%%%%%%%%%%%%%%%%%%%%%%%%%%%%%%%%%%%%%%%%%%%%%%%%%%
\section{The Cohen-Macaulay Defect And Connectedness}
%%%%%%%%%%%%%%%%%%%%%%%%%%%%%%%%%%%%%%%%%%%%%%%%%%%%%%%%%%%%%%%%%%%%%%

Let us start by fixing some notation. We will write $K$ for the base field, which is assumed to be algebraically closed. Furthermore, $G$ is a finite algebraic group and $X$ an affine variety (both defined over $K$) on which $G$ acts by automorphisms. We will write $n$ for the dimension of $X$ throughout this paper.
The group action on $X$ induces an action on its coordinate ring $K[X]$. A subset $S \subseteq K[X]^G$ of the invariant ring is called separating if it suffices the following property: If there exist $x, y \in X$ and $f \in K[X]^G$ with $f(x) \neq f(y)$, then there exists $g \in S$ with $g(x) \neq g(y)$. Following Kemper \cite{kemper2009separating}, we write $\gammasep$ for the smallest natural number $m$ such that there exists a separating subset of size $m$.
%with the following property for all $x, y \in X$:
%\[ g(x) = g(y) \,\, \forall g \in S\,\, \Rightarrow \,\,f(x) = f(y) \,\,\forall f \in K[X]^G.\]
%For a finite group, there is always a geometric quotient. Hence, all orbits can be separated by invariants.

The separating variety $\Vsep$ is the following subvariety of $X \times X$:
\[ \Vsep = \{ (x,y) \in X \times X \mid f(x) = f(y)  \text{ for every } f \in K[X]^G \}.\]
We see that a set of invariants $S \subseteq K[X]^G$ is separating if and only if the ideal in $K[X] \otimes_K K[X]$ generated by $g \otimes 1 - 1 \otimes g$ with $g \in S$ defines $\Vsep$ as a variety, i.e. has the same radical as $\left( f \otimes 1 - 1 \otimes f \mid f \in K[X]^G \right)$.

Now we recall the definition of connectedness in a certain codimension (see \cite{hartshorne1962complete}). For $k \in \N_0$,
a noetherian topological space $Y$ is called
connected in codimension $k$ if it satisfies one of the following two equivalent properties:
\begin{enumerate}
\item for all closed subsets $A \subseteq Y$ with $\codim_Y(A) > k$, the space $Y \setminus A$ is connected,
\item for all irreducible components $Y'$ and $Y''$ of $Y$ there exists a finite sequence $Y_0, \, \ldots, \, Y_r$ of irreducible components of $Y$ with $Y_0 = Y'$, \, $Y_r = Y''$ and $\codim_Y(Y_i \cap Y_{i+1}) \leq k$.
\end{enumerate}
Of course, being connected in codimension $k$ implies being connected in codimension $k+1$.
So, we have a chain of properties of $Y$, starting from $Y$ being connected in codimension $0$, which is equivalent to $Y$ being irreducible, leading to $Y$ being connected in codimension $\dim(Y)$, which is equivalent to $Y$ simply being connected.

We will state Hartshorne's Connectedness Theorem in the form that we want to use.

\begin{thm}\label{TheoremHartshornesConnectedness} 
Let $R$ be a noetherian ring and $k \in \N_{0}$. We assume, that $\Spec(R)$ is connected and that for all $\p \in \Spec(R)$ with $\dim(R_{\p}) > k$ we have $\depth(R_{\p}) \geq 2$. Then $\Spec(R)$ is connected in codimension $k$. 
\end{thm}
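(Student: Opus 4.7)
The plan is to argue by contradiction and reduce to Hartshorne's classical local theorem by localizing at a minimal prime of a hypothetical disconnecting locus. Suppose some closed $A \subseteq \Spec(R)$ has $\codim_{\Spec(R)}(A) > k$ and yet $\Spec(R) \setminus A = U \sqcup V$ is a disconnection into nonempty open pieces. Let $Z_1 = \overline{U}$ and $Z_2 = \overline{V}$. Then $Z_1 \cup Z_2 = \Spec(R)$ and $Z_1 \cap Z_2 \subseteq A$, and connectedness of $\Spec(R)$ makes the intersection nonempty.

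I would then pick $\p \in Z_1 \cap Z_2$ minimal, so that $V(\p) \subseteq A$ forces $\dim(R_\p) = \height(\p) > k$ and hence $\depth(R_\p) \geq 2$ by the assumption of the theorem. Since $U$ is open with closure $Z_1$, every generic point of an irreducible component of $Z_1$ lies in $U$; in particular no such generic point lies in $A$, so none of them equals $\p$. Thus there exists a prime $\q_1 \subsetneq \p$ with $\q_1 \in U$, and symmetrically a prime $\q_2 \subsetneq \p$ with $\q_2 \in V$. Minimality of $\p$ in $Z_1 \cap Z_2$ additionally guarantees that no prime strictly below $\p$ lies in both $Z_1$ and $Z_2$, so the traces of $Z_1$ and $Z_2$ meet only at $\p R_\p$ inside $\Spec(R_\p)$.

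Consequently $\Spec(R_\p) \setminus \{\p R_\p\}$ is the disjoint union of two nonempty relatively closed subsets, hence disconnected. Passing to $\widehat{R_\p}$, which is faithfully flat over $R_\p$, has the same depth, and sends its closed point to $\p R_\p$, this disconnection lifts to the complete local ring, contradicting the classical Hartshorne Connectedness Theorem for complete local noetherian rings of depth $\geq 2$. The main obstacle, and the only step requiring genuine care, is verifying that $\p$ really carries specializations in both $U$ and $V$, so that localizing at $\p$ produces a genuinely disconnected punctured spectrum; both the minimality of $\p$ in $Z_1 \cap Z_2$ and the fact that $\p \in A$ enter decisively at this step.
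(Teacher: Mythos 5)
Your argument is correct, but it is a genuinely different route from the paper's. The paper does not prove this theorem directly at all: it cites Hartshorne's Corollary 2.3, which asserts under the same depth hypothesis that $\Spec(R)$ is \emph{locally} connected in codimension $k$, and then invokes Hartshorne's Remark 1.3.2 that a connected space which is locally connected in codimension $k$ is connected in codimension $k$. You instead work with characterization (1) of the definition and give a self-contained reduction: assuming a closed set $A$ with $\codim_{\Spec(R)}(A) > k$ disconnects the complement as $U \sqcup V$, you correctly establish that $\overline{U} \cap \overline{V}$ is a nonempty closed subset of $A$, that a minimal prime $\p$ of it has $\height(\p) = \codim(V(\p)) \geq \codim(A) > k$ and hence $\depth(R_\p) \geq 2$, and that the generic points of the components of $\overline{U}$ (resp.\ $\overline{V}$) lie in $U$ (resp.\ $V$) and give primes strictly below $\p$ on each side, so that the punctured spectrum of $R_\p$ is disconnected --- contradicting the classical local Hartshorne theorem. (The passage to $\widehat{R_\p}$ is harmless but not needed if one uses the version of that theorem for arbitrary Noetherian local rings of depth $\geq 2$, e.g.\ \cite[Theorem 18.12]{eisenbud1995commutative}.) In effect you have reproven Hartshorne's Corollary 2.3 in the special case needed, which makes the mechanism transparent at the cost of redoing work the paper outsources; the paper's citation is shorter but leaves the reader to unwind the notion of local connectedness in codimension $k$.
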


\begin{proof}
In \cite[Corollary 2.3]{hartshorne1962complete} it was shown that $\Spec(R)$ is \emph{locally} connected in codimension k under this hypothesis. If we require $\Spec(R)$ to be connected, this implies that $\Spec(R)$ is connected in codimension $k$ (see \cite[Remark 1.3.2]{hartshorne1962complete}).
\end{proof}

For a noetherian local Ring $(R,\m)$ the Cohen-Macaulay defect is defined as
\[ \cmdef(R) := \dim(R) - \depth(R) \in \N_0.\] 
More generally, for a noetherian ring $R$ the Cohen-Macaulay defect is
\[\cmdef(R) := \sup\, \{\, \cmdef(R_{\p}) \mid \p \in \Spec(R) \,\} \in \N_0 \cup \{\infty\}, \]
which can be shown to be consistent with our definition of $\cmdef(R)$ for a local ring.

%Let us deduce a corollary from Hartshorne's Connectedness Theorem, by applying the Cohen-Macaulay defect.
Let us express a weaker version of Hartshorne's Connectedness Theorem in terms of the Cohen-Macaulay defect.

\begin{cor}\label{CorollaryHartshornesConnectednessWithCMDEF}
Let $R$ be a noetherian ring. We assume that $\Spec(R)$ is connected and that $k := \cmdef(R)$ is finite. Then $\Spec(R)$ is connected in codimension $k + 1$. 
\end{cor}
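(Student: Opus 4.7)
The plan is to deduce this corollary directly from Theorem \ref{TheoremHartshornesConnectedness} by verifying its depth hypothesis at level $k+1$. Since $\Spec(R)$ is already assumed to be connected, the only thing I need to check is that for every prime $\p \in \Spec(R)$ with $\dim(R_{\p}) > k+1$, one has $\depth(R_{\p}) \geq 2$.

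The key observation is the very definition of $\cmdef$: since $\cmdef(R) = k$ is finite, the supremum definition gives $\cmdef(R_{\p}) \leq k$ for every prime $\p$, i.e.\ $\dim(R_{\p}) - \depth(R_{\p}) \leq k$. Rearranging, $\depth(R_{\p}) \geq \dim(R_{\p}) - k$. So if $\dim(R_{\p}) > k+1$, equivalently $\dim(R_{\p}) \geq k+2$, I immediately get $\depth(R_{\p}) \geq 2$, which is exactly what Theorem \ref{TheoremHartshornesConnectedness} requires (with $k+1$ in the role of its parameter).

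Applying Theorem \ref{TheoremHartshornesConnectedness} with the parameter $k+1$ then yields that $\Spec(R)$ is connected in codimension $k+1$, which is the desired conclusion. There is no real obstacle here; the corollary is essentially a repackaging of Hartshorne's theorem in the language of the Cohen-Macaulay defect, and the only subtlety is remembering that the inequality $\cmdef(R_{\p}) \leq \cmdef(R)$ is built into the definition of $\cmdef(R)$ as a supremum over all localizations. One small point worth noting for the reader is that the shift from $k$ to $k+1$ is sharp in the following sense: at primes with $\dim(R_{\p}) = k+1$ the defect bound only forces $\depth(R_{\p}) \geq 1$, not $\geq 2$, so Theorem \ref{TheoremHartshornesConnectedness} cannot be applied at level $k$ directly, which is exactly why the corollary states codimension $k+1$ rather than $k$.
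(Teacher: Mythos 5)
Your proof is correct and is exactly the argument the paper intends (the corollary is stated without an explicit proof, being an immediate consequence of Theorem \ref{TheoremHartshornesConnectedness}): the bound $\depth(R_{\p}) \geq \dim(R_{\p}) - k \geq 2$ whenever $\dim(R_{\p}) > k+1$ verifies the hypothesis of that theorem at level $k+1$. Your remark on why the shift to $k+1$ is necessary is also apt.
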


%%%%%%%%%%%%%%%%%%%%%%%%%%%%%%%%%%%%%%%%%%%%%%%%%%%%%%%%%%%%%%%%%%%%%%
\section{Connectedness of $\Vsep$}
%%%%%%%%%%%%%%%%%%%%%%%%%%%%%%%%%%%%%%%%%%%%%%%%%%%%%%%%%%%%%%%%%%%%%%

We want to study the separating variety now. Starting with its irreducible components, we will precisely see what its connectedness in codimension $k$ means for $X$ and $G$.

\begin{prop}\label{PropositionIrreducibleComponentsOfVsep}
Let $X = \bigcup_{i=1}^r X_i$ be decomposed into its irreducible components $X_i$. Then for all $i$ and for all $\sigma \in G$
\[ H_{\sigma,i} := \{ (x, \sigma x) \mid x \in X_i\} \subseteq X \times X\]
is an irreducible component of $\Vsep$, and $\Vsep$ is the union of all $H_{\sigma,i}$. 
%\[ \Vsep = \bigcup_{\sigma \in G}\bigcup_{i=1}^r H_{\sigma,i}.\]
\end{prop}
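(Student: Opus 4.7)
The plan is to first reduce $\Vsep$ to the graph of the orbit relation, and then analyze each piece $H_{\sigma,i}$ directly.

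The central input is the classical fact that for a finite group $G$ acting on an affine variety $X$ over an algebraically closed field $K$, the invariants separate orbits: $(x,y) \in \Vsep$ if and only if $y \in Gx$. I would prove this by the standard orbit-separation argument. Given $x,y \in X$ with $Gx \cap Gy = \emptyset$, the two finite (hence closed) sets $Gx$ and $Gy$ are disjoint, so by the Nullstellensatz / Chinese Remainder Theorem there exists $g \in K[X]$ with $g \equiv 0$ on $Gx$ and $g \equiv 1$ on $Gy$. Then
\[ f(z) := \prod_{\sigma \in G} g(\sigma z) \]
lies in $K[X]^G$ (the product is permuted by the action), satisfies $f(x) = 0$, and $f(y) = 1$. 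This shows $\Vsep \subseteq \{(x,y) \mid y \in Gx\}$; the reverse inclusion is immediate from the definition of an invariant. Consequently
\[ \Vsep = \bigcup_{\sigma \in G} \{(x, \sigma x) \mid x \in X\} = \bigcup_{\sigma \in G} \bigcup_{i=1}^r H_{\sigma,i}, \]
which gives the covering statement.

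Next I would show each $H_{\sigma,i}$ is closed and irreducible in $X \times X$. Closedness: the graph $\Gamma_\sigma = \{(x, \sigma x) \mid x \in X\}$ is closed in $X \times X$ (it is the preimage of the diagonal under the morphism $(x,y) \mapsto (\sigma x, y)$ and $X$ is separated), and $H_{\sigma,i} = \Gamma_\sigma \cap (X_i \times X)$. Irreducibility: projection to the first factor identifies $H_{\sigma,i}$ with $X_i$, which is irreducible by assumption.

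Finally I would verify maximality. Since $\Vsep$ is covered by the finitely many irreducible closed sets $H_{\sigma,i}$, its irreducible components are the maximal members among them. So it suffices to rule out strict containments $H_{\sigma,i} \subsetneq H_{\tau,j}$. Projecting to the first coordinate, $H_{\sigma,i} \subseteq H_{\tau,j}$ forces $X_i \subseteq X_j$, hence $X_i = X_j$ as both are irreducible components; but since both $H_{\sigma,i}$ and $H_{\tau,j}$ are determined as graphs over their common first projection $X_i = X_j$, it follows that $H_{\sigma,i} = H_{\tau,j}$. I expect no real obstacle here; the only subtlety is confirming the orbit-separation statement for a finite \emph{algebraic} group, which reduces to the group of $K$-rational automorphisms acting on the closed points, so the averaging argument applies unchanged.
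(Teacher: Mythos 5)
Your proof is correct and follows essentially the same route as the paper: both reduce $\Vsep$ to the union of the graphs $H_\sigma = \{(x,\sigma x)\}$ using the fact that invariants of a finite group separate orbits, and then decompose each graph along the irreducible components of $X$. You additionally write out the product-averaging argument (which the paper simply cites from Derksen--Kemper) and the explicit check that no $H_{\sigma,i}$ is strictly contained in another --- a maximality step the paper's proof leaves implicit.
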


\begin{proof}
Since $G$ is finite, we know that the invariants separate the orbits (see \cite[Section 2.3]{derksen2002computational}). Therefore, two points $x, y \in X$ lie in the same orbit if and only if $(x,y) \in \Vsep$. Hence,
$\Vsep$ is the union of all $H_{\sigma} := \{ (x, \sigma x) \mid x \in X\}$. Each $H_{\sigma}$ is an affine variety isomorphic to $X$, so it decomposes as
\[ H_{\sigma} = \bigcup_{i=1}^r H_{\sigma,i} \]
into its irreducible components.
\end{proof}

\begin{remark}\label{RemarkVsepEqudimensionalToo}
With the notation of Proposition \ref{PropositionIrreducibleComponentsOfVsep} we also see:
\begin{enumerate}
\item[(a)]
Each $H_{\sigma,i}$ is isomorphic to $X_i$. In particular, the separating variety has the same dimension as $X$. 
\item[(b)] The formula
$ \codim_{X \times X}(\Vsep) = \min\limits_{i = 1..r} \dim(X_i) $ 
holds. In particular, if $X$ is equidimensional, then we have $\codim_{X \times X}(\Vsep) = n$.
\end{enumerate}
\end{remark}

\begin{definition}
For $k \in \N_0$ we call an element $\sigma \in G$ a $k$-reflection if $X^{\sigma}$ has codimension at most $k$ in $X$. For $k = 1$ we simply say that $\sigma$ is a reflection.
\end{definition}

\begin{thm}\label{TheoremVsepConnectedInCodimIfAndOnlyIfGgenerated}
Let $k \in \N_0$. Then the separating variety $\Vsep$ is connected in codimension $k$ if and only if $X$ is connected in codimension $k$ and $G$ is generated by $k$-reflections.
\end{thm}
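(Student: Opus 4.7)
The plan is to prove the two implications separately, exploiting the explicit description $\Vsep=\bigcup_{\sigma,i}H_{\sigma,i}$ provided by Proposition \ref{PropositionIrreducibleComponentsOfVsep}. For the backward direction I will use characterization (2) of connectedness in codimension $k$ (chains of irreducible components); for the forward direction, characterization (1) (removing a closed set of codimension greater than $k$ leaves the space connected) is the natural setting. Throughout, the essential simplification is that each $H_{\sigma,i}$ is \emph{itself} an irreducible component of $\Vsep$ and is isomorphic to $X_i$, so that codimensions in $\Vsep$ of closed subsets of $H_{\sigma,i}$ coincide with codimensions in $X_i$ (which coincide with codimensions in $X$ of the corresponding subset of $X_i$).

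For the ``$\Leftarrow$'' direction I would build a chain between any two given irreducible components $H_{\sigma,i}$ and $H_{\tau,j}$ by concatenating two elementary moves. The first is a \emph{change of component of $X$}: from $H_{\sigma,i}$ to $H_{\sigma,j}$ using a chain $X_i=X_{i_0},\dots,X_{i_r}=X_j$ witnessing connectedness in codimension $k$ of $X$, since $H_{\sigma,i_t}\cap H_{\sigma,i_{t+1}}\cong X_{i_t}\cap X_{i_{t+1}}$. The second is \emph{right multiplication by a $k$-reflection} $\rho$: from $H_{\sigma,i'}$ to $H_{\sigma\rho,i'}$, where $i'$ is chosen so that $\codim_{X_{i'}}(X_{i'}\cap X^\rho)\le k$; such an $i'$ exists because $\codim_X(X^\rho)$ is the minimum of the $\codim_{X_i}(X_i\cap X^\rho)$, and the relevant intersection $H_{\sigma,i'}\cap H_{\sigma\rho,i'}$ is isomorphic to $X_{i'}\cap X^\rho$. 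Writing $\sigma^{-1}\tau$ as a product of $k$-reflections and interleaving the two moves then yields the required chain.

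For the ``$\Rightarrow$'' direction I handle the two conclusions in turn. To obtain connectedness of $X$ in codimension $k$, let $A\subseteq X$ be closed of codimension greater than $k$ and define $A^*:=\{(x,y)\in\Vsep: x\in A\text{ or }y\in A\}$; since $A^*\cap H_{\sigma,i}\cong X_i\cap(A\cup\sigma^{-1}A)$ still has codimension greater than $k$ in $X_i$, we get $\codim_\Vsep(A^*)>k$, so $\Vsep\setminus A^*$ is connected and its image under the first projection is the continuous connected superset $X\setminus A$ (hit by the diagonal $\{(x,x):x\in X\setminus A\}$). To obtain that $G$ is generated by $k$-reflections, I argue contrapositively: let $H$ be the subgroup generated by all $k$-reflections and suppose $H\subsetneq G$. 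Put $Y:=\bigcup_{\rho\in G\setminus H}X^\rho$, a finite union of closed subsets each of codimension greater than $k$, and $A^*:=\Vsep\cap(Y\times X)$, which again has codimension greater than $k$ in $\Vsep$. The decisive point is that $\Vsep\setminus A^*$ splits into disjoint non-empty clopens $V_{\sigma H}:=\bigl(\bigcup_{\tau\in\sigma H}H_\tau\bigr)\setminus A^*$ indexed by the left cosets $\sigma H\in G/H$: if $(x,y)\notin A^*$ then the stabilizer of $x$ lies in $H$, so the $\tau\in G$ with $y=\tau x$ is uniquely determined modulo $H$ (forcing disjointness), while clopenness follows from finiteness of $G/H$. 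Since $H\ne G$ there are at least two non-empty such clopens, contradicting connectedness of $\Vsep\setminus A^*$.

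The main obstacle I anticipate is not conceptual but bookkeeping: since $X$ need not be equidimensional, the relations between $\codim_X$, $\codim_{X_i}$ and $\codim_\Vsep$ have to be tracked carefully, and one needs the standard fact that the codimension in $X$ of a finite union of closed subsets equals the minimum of the individual codimensions (so that $Y$ above really has codimension greater than $k$). Once these routine inequalities are in hand, both implications reduce to constructing---or destroying, in the contrapositive---the chains of irreducible components sketched above.
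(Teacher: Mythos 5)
Your proof is correct, and while your ``$\Leftarrow$'' direction is essentially the paper's argument (the same two elementary moves---changing the component of $X$ along a chain witnessing connectedness in codimension $k$, and passing from $H_{\sigma,i'}$ to $H_{\sigma\rho,i'}$ for a $k$-reflection $\rho$ with $i'$ chosen so that $\codim_X(X_{i'}^{\rho})\le k$---interleaved along a factorization of $\sigma^{-1}\tau$), your ``$\Rightarrow$'' direction takes a genuinely different route. The paper stays entirely within characterization (2): it computes $H_{\sigma,i}\cap H_{\tau,j}\simeq (X_i\cap X_j)^{\tau^{-1}\sigma}$ and then reads \emph{both} conclusions off a single chain of components from $H_{\iota,i}$ to $H_{\sigma,j}$, since a set of codimension $\le k$ contained in both $X_{i_l}\cap X_{i_{l+1}}$ and $X^{\sigma_l^{-1}\sigma_{l+1}}$ forces each to have codimension $\le k$; this is shorter and uniform. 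You instead use characterization (1): projecting $\Vsep\setminus A^*$ to deduce connectedness of $X\setminus A$, and a disconnection argument in which $\Vsep\setminus A^*$ is partitioned into clopen pieces indexed by the cosets $G/H$ of the subgroup $H$ generated by $k$-reflections. This is closer in spirit to Dufresne's original Hartshorne-style reasoning and makes topologically visible \emph{why} non-reflection elements disconnect $\Vsep$ once a small set is removed, at the cost of the codimension bookkeeping you flag. One point to tighten: your intermediate claim that $X_i\cap(A\cup\sigma^{-1}A)$ has codimension $>k$ \emph{in $X_i$} is literally false for non-equidimensional $X$ (take $X$ a plane and a line through a common point $P$, $A=\{P\}$, $k=1$: then $\codim_X(A)=2$ but $\codim_{X_{\text{line}}}(A)=1$); the correct route to $\codim_{\Vsep}(A^*)>k$ is to note that $\codim_{\Vsep}$ of an irreducible $Z\subseteq A^*$ is the maximum of $\dim(H_{\tau,j})-\dim(Z)$ over components containing $Z$, and the component $X_j$ realizing $\codim_X>k$ of the corresponding subset of $X$ yields an $H_{\sigma,j}\supseteq Z$ doing the job. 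Since you explicitly anticipate this, it is an imprecision to be repaired rather than a gap.
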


\begin{proof}
Again, let $X = \bigcup_{i=1}^r X_i$ be decomposed into its irreducible components $X_i$, which leads to the components $H_{\sigma,i}$ of $\Vsep$ as seen in Proposition \ref{PropositionIrreducibleComponentsOfVsep}. First, we want to look at the intersection of two components of $\Vsep$ and see which codimension arises. For $\sigma, \, \tau \in G$ and $i, \, j$ we have
\[ H_{\sigma, i} \cap H_{\tau, j} = \{ (x,y) \mid x \in X_i \cap X_j, \, y = \sigma x = \tau x \} \simeq (X_i \cap X_j)^{\tau^{-1}\sigma}.\]
We know from Remark \ref{RemarkVsepEqudimensionalToo} that $\dim(X) = n = \dim(\Vsep)$. In addition, we get
\begin{equation}\label{EquationCodimensionIntersection1} \codim_{\Vsep}(H_{\sigma, i} \cap H_{\tau, j}) = \codim_X((X_i \cap X_j)^{\tau^{-1}\sigma}).\end{equation}
Suppose $\Vsep$ is connected in codimension $k$. By assumption, for all $\sigma \in G$ and $i, j$ there exists a sequence of irreducible components $H_{\sigma_0, i_0},\, \dots,\, H_{\sigma_s, i_s}$ of $\Vsep$ with $i_0 = i,\, i_s = j,\, \sigma_0 = \iota$ (the neutral element of $G$), $\sigma_s = \sigma$ and 
\begin{equation}\label{EquationCodimensionIntersection2}\codim_{\Vsep}\left(H_{\sigma_l, i_l} \cap H_{\sigma_{l+1}, i_{l+1}}\right) \leq k  \quad \text{ for all } l. \end{equation}
Putting (\ref{EquationCodimensionIntersection1}) and (\ref{EquationCodimensionIntersection2}) together leads to the inequality \begin{equation}\label{EquationCodimensionIntersection3}\codim_{X}\left(X_{i_l} \cap X_{i_{l+1}}\right)^{\sigma_{l}^{-1} \sigma_{l+1}}) \leq k  \quad \text{ for all } l. \end{equation}
In particular, (\ref{EquationCodimensionIntersection3}) shows that $X_{i_l} \cap X_{i_{l+1}}$ has codimension $\leq k$. So we have a sequence of irreducible components from $X_{i_0} = X_i$ to $X_{i_s} = X_j$ that intersect in codimension $\leq k$, i.e.~$X$ is connected in codimension $k$.\\
Moreover, (\ref{EquationCodimensionIntersection3}) implies that all $X^{\sigma_{l}^{-1} \sigma_{l+1}}$ have codimension $\leq k$, i.e. each $\sigma_{l}^{-1} \sigma_{l+1}$ is a $k$-reflection. Using $\sigma_0 = \iota$ and $\sigma_s = \sigma$ we can write 
\[\sigma = \sigma_0^{-1} \sigma_s = (\sigma_0^{-1} \sigma_1) \cdot (\sigma_1^{-1} \sigma_2) \cdot \dots \cdot (\sigma_{s-1}^{-1} \sigma_s)\]
as a product of $k$-reflections.\\
So, we have proven the only-if-part by simply splitting \ref{EquationCodimensionIntersection3} into two weaker conclusions. It may therefore be surprising that the converse holds as well.\\
To prove it, let us start with $i,\, j$ and a sequence of components $X_{i_0},\, \dots,\, X_{i_s}$ with $X_i = X_{i_0}, \, X_j = X_{i_s}$ and $\codim_X(\left(X_{i_l} \cap X_{i_{l+1}}\right) \leq k$. Consequently, for $\sigma \in G$ we know from (\ref{EquationCodimensionIntersection1}), that all $H_{\sigma, i_l} \cap H_{\sigma, i_{l+1}}$ have codimension $\leq k$. So we already have a sequence from $H_{\sigma, i}$ to $H_{\sigma, j}$ as desired.\\
Now take two elements $\sigma', \, \sigma'' \in G$. By assumption, there exist $k$-reflections $\tau_1,\, \ldots,\, \tau_s \in G$ with $(\sigma')^{-1} \sigma'' = \tau_1 \cdot \ldots \cdot \tau_s$.
Since
\[\min_{j=0..r}\codim_X(X_j^{\tau_l}) = \codim_X(X^{\tau_l}) \leq k,\]
for each $\tau_l$ there exists an $i_l$ such that
\begin{equation}\label{EquationCodimensionIntersection4} \codim_X(X_{i_l}^{\tau_l}) \leq k. \end{equation}
If we write $\sigma_0 := \sigma'$ and $\sigma_l := \sigma_{l-1} \tau_l$ for $l = 1..s$, then
\[ \sigma_s = \sigma_0 \cdot \tau_1 \cdot \ldots \cdot \tau_s = \sigma' \cdot (\sigma'^{-1} \cdot \sigma'') = \sigma''.\]
It follows from (\ref{EquationCodimensionIntersection1}) together with (\ref{EquationCodimensionIntersection4}) that
\[\codim_{\Vsep}(H_{\sigma_{l-1}, i_l} \cap H_{\sigma_{l}, i_l}) = \codim_{X}((X_{i_l})^{\sigma_{l-1}^{-1} \sigma_l}) =  \codim_X((X_{i_l})^{\tau_l}) \leq k.\]
We already saw how to construct a sequence of components from every $H_{\sigma, i_l}$ to $H_{\sigma, i_{l+1}}$ as desired. Putting these together, for all $i,\, j$ we can construct a sequence
\[H_{\sigma_0, i},\, \ldots ,\, H_{\sigma_0, i_1}, H_{\sigma_1, i_1},\, \ldots ,\, H_{\sigma_1, i_2}, H_{\sigma_2, i_2},\, \ldots ,\, H_{\sigma_s, i_s},\, \ldots ,\, H_{\sigma_s, j},\]
from $H_{\sigma', i}$ to $H_{\sigma'', j}$ such that two successive components intersect in codimension $\leq k$.
\end{proof}

Since we need $\Vsep$ to be connected in the proof of our main Theorem \ref{MainTheorem}, we specialize Theorem \ref{TheoremVsepConnectedInCodimIfAndOnlyIfGgenerated} to the case $k = n = \dim(X)$.

\begin{cor} \label{CorollaryVsepConnectedIfAndOnlyIf}
%Let $X$ be equdimensional. Then
The separating variety $\Vsep$ is connected if and only if $X$ is connected and $G$ is generated by elements having a fixed point.
\end{cor}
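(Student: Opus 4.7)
The plan is to apply Theorem \ref{TheoremVsepConnectedInCodimIfAndOnlyIfGgenerated} with $k = n = \dim(X)$ and then translate each of the three resulting conditions into the statement of the corollary. There should be essentially nothing to do beyond identifying what the ``codimension $n$'' clauses and the ``$n$-reflection'' clause mean when $k$ is pushed up to the dimension of $X$.

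For the connectedness clauses, I would invoke the observation recorded just after the definition of connectedness in codimension: a noetherian topological space $Y$ is connected in codimension $\dim(Y)$ if and only if $Y$ is connected. By Remark \ref{RemarkVsepEqudimensionalToo}(a) we have $\dim(\Vsep) = n$, so the left-hand side of Theorem \ref{TheoremVsepConnectedInCodimIfAndOnlyIfGgenerated} with $k = n$ becomes simply ``$\Vsep$ is connected''. Since $\dim(X) = n$ by hypothesis, the same equivalence applied to $X$ turns ``$X$ is connected in codimension $n$'' into ``$X$ is connected''.

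For the generator clause, I would verify that ``$n$-reflection'' is the same notion as ``element with a fixed point''. By definition $\sigma$ is an $n$-reflection iff $\codim_X(X^\sigma) \leq n$. Any non-empty closed subset of the $n$-dimensional noetherian space $X$ has codimension at most $n$ (localize at any of its points to obtain a local ring of Krull dimension $\leq n$), whereas the empty set has codimension $+\infty$. Hence $\codim_X(X^\sigma) \leq n$ is equivalent to $X^\sigma \neq \emptyset$, i.e.\ to $\sigma$ having a fixed point in $X$. Consequently, $G$ is generated by $n$-reflections iff $G$ is generated by elements with a fixed point.

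Combining the three translations, Theorem \ref{TheoremVsepConnectedInCodimIfAndOnlyIfGgenerated} at $k = n$ specializes to exactly the claim. The only mildly delicate point is the codimension-at-most-$n$ statement for non-empty closed subsets, and that is pure unwinding of definitions; everything else is a formal specialization of the previous theorem, so I do not expect any genuine obstacle.
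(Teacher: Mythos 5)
Your proposal is correct and matches the paper's approach exactly: the paper states this corollary as a direct specialization of Theorem \ref{TheoremVsepConnectedInCodimIfAndOnlyIfGgenerated} to $k = n$, relying on the same observation that connectedness in codimension $\dim(Y)$ is plain connectedness and that an $n$-reflection is precisely an element with a fixed point. Your explicit check that a non-empty closed subset of $X$ has codimension at most $n$ (and the empty set does not) is the only detail the paper leaves implicit, and you handle it correctly.
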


%%%%%%%%%%%%%%%%%%%%%%%%%%%%%%%%%%%%%%%%%%%%%%%%%%%%%%%%%%%%%%%%%%%%%%
\section{Generated by $k$-Reflections}
%%%%%%%%%%%%%%%%%%%%%%%%%%%%%%%%%%%%%%%%%%%%%%%%%%%%%%%%%%%%%%%%%%%%%%

We will now combine Corollary \ref{CorollaryHartshornesConnectednessWithCMDEF} with our results on $\Vsep$. Let us fix some more notation, which we will also use in our examples. 
We write $R$ for the ring $K[X] \otimes_K K[X]$ and $I$ for the ideal in $R$ generated by $f \otimes 1 - 1 \otimes f$ with $f \in K[X]^G$. So $K[\Vsep] = R / \sqrt{I}$ is the coordinate ring of the separating variety.

\begin{thm}\label{MainTheorem} Let $X$ be connected. 
We assume furthermore that $G$ is generated by elements having a fixed point. Define
\[ k := \min\, \{ \cmdef(R/J) \mid J \subseteq R \text{ an ideal with } \sqrt{J} = \sqrt{I} \} .\] 
Then $G$ is generated by $(k+1)$-reflections.
\end{thm}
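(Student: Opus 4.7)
The plan is to chain together the three prepared ingredients: Corollary \ref{CorollaryVsepConnectedIfAndOnlyIf} provides connectedness of $\Vsep$, Corollary \ref{CorollaryHartshornesConnectednessWithCMDEF} applied to a suitable quotient of $R$ will upgrade this to connectedness in codimension $k+1$, and Theorem \ref{TheoremVsepConnectedInCodimIfAndOnlyIfGgenerated} will then translate that geometric condition into the required reflection-theoretic statement.

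More concretely, I would first observe that the hypotheses on $X$ and $G$ are exactly those of Corollary \ref{CorollaryVsepConnectedIfAndOnlyIf}, so $\Vsep$ is connected. Next I would choose an ideal $J \subseteq R$ with $\sqrt{J} = \sqrt{I}$ realizing the minimum, so $\cmdef(R/J) = k$. Since $R/J$ is a finitely generated $K$-algebra, it is noetherian of finite Krull dimension, so $k$ is finite (and the minimum is certainly well-defined because $J = I$ is an eligible candidate). The surjection $R/J \twoheadrightarrow R/\sqrt{I}$ has nilpotent kernel, so it induces a homeomorphism $\Spec(R/J) \cong \Spec(R/\sqrt{I}) = \Vsep$. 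In particular $\Spec(R/J)$ is connected, so Corollary \ref{CorollaryHartshornesConnectednessWithCMDEF} yields that $\Spec(R/J)$ is connected in codimension $k+1$, and transporting this back through the homeomorphism gives that $\Vsep$ itself is connected in codimension $k+1$.

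Finally, I would invoke Theorem \ref{TheoremVsepConnectedInCodimIfAndOnlyIfGgenerated} with the integer $k+1$ to conclude that $G$ is generated by $(k+1)$-reflections (and, as a byproduct, that $X$ is connected in codimension $k+1$, though this is not part of the statement). I do not expect any step to present a real obstacle; the argument is essentially a bookkeeping exercise that glues the pieces together. The one point that deserves care is the observation that connectedness in a given codimension is intrinsic to the underlying topological space of $\Spec$, so it is insensitive to the choice of $J$ among ideals with radical $\sqrt{I}$—which is precisely why allowing this minimum in the definition of $k$ sharpens the bound and matches the set-theoretic Cohen--Macaulay defect referenced in the introduction.
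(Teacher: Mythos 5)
Your proposal is correct and follows exactly the paper's own argument: connectedness of $\Vsep$ via Corollary \ref{CorollaryVsepConnectedIfAndOnlyIf}, then Corollary \ref{CorollaryHartshornesConnectednessWithCMDEF} applied to a minimizing $J$ (using that $\Spec(R/J)$ and $\Vsep$ are homeomorphic), and finally Theorem \ref{TheoremVsepConnectedInCodimIfAndOnlyIfGgenerated}. Your added remarks on the finiteness of $k$ and the topological invariance under change of $J$ are correct and only make the bookkeeping more explicit.
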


\begin{proof}
Our assumptions about $X$ and the action of $G$ imply that $\Vsep$ is connected (see Corollary \ref{CorollaryVsepConnectedIfAndOnlyIf}). Let $J$ be an ideal in $R$ with $\sqrt{J} = \sqrt{I}$ and $k = \cmdef(R/J)$.
Corollary \ref{CorollaryHartshornesConnectednessWithCMDEF} tells us now, that $\Spec(K[\Vsep]) \simeq \Spec(R/J)$ is connected in codimension $k+1$. Of course, it is equivalent to say that $\Vsep$ is connected in codimension $k+1$. Therefore, by Theorem \ref{TheoremVsepConnectedInCodimIfAndOnlyIfGgenerated}, 
$G$ is generated by $(k+1)$-reflections.
\end{proof}

In general, $I$ need not be radical, and neither $I$ nor $\sqrt{I}$ must have the smallest 
Cohen-Macaulay defect among all ideals $J \subseteq R$ with $\sqrt{J} = \sqrt{I}$ (see Example \ref{ExampleDifferentCMDEF}). Therefore, the number $k$ in Theorem \ref{MainTheorem} need not be the Cohen-Macaulay defect of $\Vsep$. Since an ideal is called set-theoretically Cohen-Macaulay if there exists a Cohen-Macaulay ideal with the same radical (cf. \cite{singh2007local}), we propose to call this number the set-theoretical Cohen-Macaulay defect of $\Vsep$. 

To the best of my knowledge, no algorithm is known for computing the set-theoretical Cohen-Macaulay defect of $R/I$. This might be the reason why I could not find an example in which this number is not the minimal $m$ such that $G$ is generated by $(m+1)$-reflections. However, there are surprisingly many examples (like Example \ref{ExampleDifferentCMDEF}) in which these two numbers coincide.

\begin{cor}\label{MainCorollary} Let $X$ be connected and Cohen-Macaulay. We assume furthermore that $G$ is generated by elements having a fixed point. If $\gammasep = n$, then $G$ is generated by reflections.
\end{cor}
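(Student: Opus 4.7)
The plan is to apply Theorem \ref{MainTheorem} and show that the integer $k$ defined there is $0$; the theorem then gives that $G$ is generated by $(0+1)$-reflections, i.e.~by reflections. So I need to exhibit an ideal $J \subseteq R$ with $\sqrt{J} = \sqrt{I}$ such that $R/J$ is Cohen-Macaulay.

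The hypothesis $\gammasep = n$ yields $n$ invariants $g_1, \ldots, g_n \in K[X]^G$ forming a separating set, so by the equivalence stated just after the definition of $\Vsep$ the ideal
\[ J := \bigl( g_i \otimes 1 - 1 \otimes g_i \mid i = 1, \ldots, n \bigr) \subseteq R \]
satisfies $\sqrt{J} = \sqrt{I}$, and hence $V(J) = \Vsep$. Since $X$ is connected and Cohen-Macaulay, every irreducible component has dimension $n$ (local Cohen-Macaulayness prevents components of different dimensions from meeting at a common point, so in the connected case all components share the same dimension); combined with Remark \ref{RemarkVsepEqudimensionalToo}(b) this gives $\height(J) = \codim_{X \times X}(\Vsep) = n$.

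Next, $R = K[X] \otimes_K K[X]$ is Cohen-Macaulay, since the tensor product of two finitely generated Cohen-Macaulay algebras over a field is Cohen-Macaulay. The ideal $J$ has $n$ generators and height $n$ in the Cohen-Macaulay ring $R$, so these generators form a regular sequence (the standard fact that in a Cohen-Macaulay ring an ideal generated by $c$ elements of height $c$ is generated by a regular sequence, checked locally at each prime containing $J$). Consequently $R/J$ is Cohen-Macaulay, whence $\cmdef(R/J) = 0$ and therefore $k = 0$ in Theorem \ref{MainTheorem}.

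The step I expect to require the most care is identifying $R/J$ as Cohen-Macaulay: it rests on two ingredients, namely the Cohen-Macaulayness of $R$ and the coincidence of $\height(J)$ with the number of generators, both of which depend on the Cohen-Macaulay hypothesis on $X$ together with the bound $\gammasep = n$. The connectedness of $X$ and the fixed-point hypothesis on generators of $G$ enter only to make Theorem \ref{MainTheorem} applicable, by ensuring via Corollary \ref{CorollaryVsepConnectedIfAndOnlyIf} that $\Vsep$ is connected; the subsequent examples in the paper are presumably designed to show that neither of these hypotheses can be dropped.
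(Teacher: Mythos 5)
Your proposal is correct and follows essentially the same route as the paper: the same ideal $J$ built from the $n$ separating invariants, the same height computation $\height(J) = \codim_{X \times X}(\Vsep) = n$ using equidimensionality of the Cohen-Macaulay connected $X$, the same complete-intersection argument giving $\cmdef(R/J) = 0$, and the same application of Theorem \ref{MainTheorem} with $k = 0$.
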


\begin{proof}
Since $X$ is Cohen-Macaulay, we also know that $X \times X$ is Cohen-Macaulay. We use \cite[Theorem 2.1]{bouchiba2002tensor} as a reference for that. In addition, $X$ is connected, so $X$ and $X \times X$ are  also equidimensional, since local Cohen-Macaulay rings are equidimensional (\cite[Corollary 18.11]{eisenbud1995commutative}).

Now let $\{ f_1,\, \ldots ,\, f_n \}$ be a set of separating invariants. Based on this separating set we define the ideal
\[ J := \left( f_1 \otimes 1 - 1 \otimes f_1,\, \ldots ,\, f_n \otimes 1 - 1 \otimes f_n\right) \subseteq R, \]
which has the same radical as $I$. Hence we have
\[ \height(J) = \height(\sqrt{I}) = \codim_{X \times X}(\Vsep) =  n,\]
by Remark \ref{RemarkVsepEqudimensionalToo}.
This tells us that $J$ is a complete intersection ideal. Since $R$ is Cohen-Macaulay, we know, that $R/J$ is Cohen-Macaulay as well (see \cite[Proposition 18.13]{eisenbud1995commutative}). Now we can use Theorem \ref{MainTheorem} with $k = 0$.

\end{proof}

\begin{remark} Of course, the assumptions on $X$ and $G$ in Theorem \ref{MainTheorem} and Corollary \ref{MainCorollary} are satisfied if $X = V$ is a linear representation of $G$.
\end{remark}

Dufresne \cite{dufresne2009separating} gave an example of a representation for which the invariant ring is not a polynomial ring, but still $\gammasep$ equals $n$. This suggested that the choice of $J$ in Theorem \ref{MainTheorem} matters. The following example is constructed of the same type and results in various Cohen-Macaulay defects. 
It is taken from Kemper's et al. \cite{kemper2001database} database of invariant rings.

\begin{example}\cite[ID 10253]{kemper2001database}\label{ExampleDifferentCMDEF}
Let the base field $K$ be of characteristic $2$. As always, $K$ is assumed to be algebraically closed. We look at the following subgroup, isomorphic to $C_2 \times C_2 \times C_2$, of $\GL_4(K)$: 
\[ G := \langle \begin{pmatrix}1&0&0&0\\0&1&0&0\\0&0&1&0\\0&0&1&1\end{pmatrix},\, \begin{pmatrix}1&0&0&0\\1&1&0&0\\0&0&1&0\\0&0&0&1\end{pmatrix},\,\begin{pmatrix}1&0&0&0\\1&1&1&0\\0&0&1&0\\1&0&1&1\end{pmatrix} \rangle \subseteq \GL_4(K).\]
Its natural action on $V = K^4$ is generated by reflections.
Using the computer algebra system \textsc{magma} \cite{bosma1997magma}, we have computed the primary invariants
\begin{eqnarray*}
f_1 &:=& x_1,\\
f_2 &:=& x_3,\\
f_3 &:=& x_1^2 x_3 x_4 + x_1^2 x_4^2 + x_1 x_3^2 x_4 + x_1 x_3 x_4^2 + x_3^2 x_4^2 + x_4^4,\\
f_4 &:=& x_1^3 x_2 + x_1 x_2 x_3^2 + x_1 x_3^2 x_4 + x_1 x_3 x_4^2 + x_2^4 + x_2^2 x_3^2 + x_3^3 x_4 + x_3^2 x_4^2
\end{eqnarray*}
and a secondary invariant
\[h  := x_1^2 x_2 + x_1 x_2^2 + x_3^2 x_4 + x_3 x_4^2.\]
Hence, the invariant ring 
\[ K[V]^G = K[x_1,\,x_2,\,x_3,\,x_4]^G = K[f_1,\,f_2,\,f_3,\,f_4,\,h] \]
is not a polynomial ring. Between our generating invariants, there is the relation
\[ f_1^3 h + f_1^2 f_3 + f_1 f_2^2 h + f_2^2 f_4 + h^2 = 0.\]
If we define $g_3 := f_1 h + f_3$ and $g_4 := f_1 h + f_4$, then we get $h^2 = f_1^2 g_3 + f_2^2 g_4$. Therefore, $K[V]^G$ lies in the purely inseparable closure of $A := K[f_1,\,f_2,\,g_3,\,g_4]$ in $K[V]^G$, which shows that $\{ f_1,\,f_2,\,g_3,\,g_4 \}$ is separating.
With the notation as above, the invariant ring defines the ideal $I$ in $R$, which is not a radical ideal 
in this example. Let $J$ be the ideal in $R$ generated by $g \otimes 1 - 1 \otimes g$ with $g \in A$. Using the graded version of the Auslander-Buchsbaum formula, we calculated the following Cohen-Macaulay defects with \textsc{magma}:
\[ \cmdef(R/I) = 2, \quad \cmdef(R/\sqrt{I}) = 1, \quad \cmdef(R/J) = 0.\]
Of course, $\cmdef(R/J) = 0$ is not surprising, as it was used in Corollary \ref{MainCorollary}.
\end{example}

The following example shows that the assumption that $G$ has fixed points cannot be dropped from Corollary \ref{MainCorollary}.
\begin{example}
Let the characteristic of $K$ be a prime number $p$, and let $G = \F_p$ be the cyclic group of order $p$. When we look at the additive action of $\F_p$ on $V = K$ via $(\sigma, x) \mapsto \sigma + x$, we see that 
\[K[V]^G = K[x]^G = K[x^p - x] \]
is a polynomial ring. But a non-zero group element $\sigma \in \F_p$ does not have a fixed point, so in particular, $G$ is not a reflection group.
\end{example}

The following example shows that the assumption that $X$ is Cohen-Macaulay cannot be dropped from Corollary \ref{MainCorollary}.
\begin{example}
Let the characteristic of $K$ be $\neq 2$
and consider the affine variety
%\[X = \V (x_1^2 - x_3^2, \, x_2^2 - x_4^2,\, x1x2 + x1x4 - x2x3 - x3x4, \, x1x2 - x1x4 + x2x3 - x3x4 ) \subseteq K^4, \]
\[X = \V (x_1^2 - x_3^2, \, x_2^2 - x_4^2,\, x_1x_2 - x_3x_4, \, x_1x_4 - x_2x_3) \subseteq K^4, \]
which is the union of two planes intersecting at the origin
\[X = \V (x_1 - x_3, \, x_2 - x_4 ) \cup \V (x_1 + x_3, \, x_2 + x_4 ).\]
So Hartshorne's Connectedness Theorem tells us that $X$ is not Cohen-Macaulay at the origin, since it is not connected in codimension 1 there. \\ Now we look at the following representation of the cyclic group of order 2:
\[G = \langle \begin{pmatrix}1&0&0&0\\0&1&0&0\\0&0&-1&0\\0&0&0&-1\end{pmatrix}\rangle \subseteq \GL_4(K)  .\]
This induces an action of $G$ on $X$, which interchanges the two planes. Obviously, this action on $X$ is  not generated by reflections. The invariant ring of the representation $V = K^4$ can be easily seen to be
\[ K[V]^G = K[x_1,\,x_2,\,x_3^2,\, x_3x_4, \, x_4^2].\]
Since we are in a non-modular case, the finite group $G$ is linearly reductive. Therefore, $K[X]^G$ is the quotient ring of $K[V]^G$ modulo the vanishing ideal of $X$.
% \[ K[X]^G = K[V]^G / \mathcal{I}(X) = K[\overline{x}_1,\,\overline{x}_2].\]
We get 
\[ K[X]^G = K[\overline{x}_1,\,\overline{x}_2].\] 
So the invariant ring of the action on $X$ is a polynomial ring, while the group is not generated by reflections.
\end{example}

%-------------------------------
%Literaturverzeichnis
%-------------------------------

\bibliographystyle{plain}
\bibliography{literature}

\end{document}